\newcommand*{\mailto}[1]{\href{mailto:#1}{#1}}
\numberwithin{equation}{section}
\newtheorem{theorem}[equation]{Theorem}
\newtheorem{lemma}[equation]{Lemma}
\theoremstyle{definition}
\newtheorem{definition}[equation]{Definition}
\theoremstyle{remark}
\newtheorem{example}[equation]{Example}
\newcommand*{\N}{\mathbb{N}}
\newcommand*{\Z}{\mathbb{Z}}
\newcommand*{\R}{\mathbb{R}}
\newcommand*{\C}{\mathbb{C}}
\newcommand*{\cD}{\mathcal{D}}
\newcommand*{\cP}{\mathcal{P}}
\newcommand*{\cQ}{\mathcal{Q}}
\newcommand*{\cS}{\mathcal{S}}
\newcommand*{\cX}{\mathcal{X}}
\newcommand*{\dif}{\mathrm{d}}
\newcommand*{\dist}{\mathrm{dist}}
\def\<{\left\langle}
\def\>{\right\rangle}
\newcommand*{\widevec}[1]{\overrightarrow{#1}}
\newcommand{\Vop}[1][r]{\mathcal{V}^{#1}}
\newcommand{\hVop}[1][r]{\dot{\mathcal{V}}^{#1}}
\newcommand{\Nop}{\mathcal{N}}
\newcommand{\hV}[3][r]{\dot V^{#1}(#2 : #3)}
\newcommand{\Aop}{\mathcal{A}}
\newcommand{\Top}{\mathcal{T}}
\newcommand{\Dini}{\mathrm{Dini}}
\newcommand{\calS}{\mathcal{S}}
\newcommand{\calD}{\mathcal{D}}
\newcommand{\calQ}{\mathcal{Q}}
\newcommand{\mean}[2][Q]{\<#2\>_{#1}}
\def\clap#1{\hbox to 0pt{\hss#1\hss}}
\def\mathclap{\mathpalette\mathclapinternal}
\def\mathclapinternal#1#2{%
\clap{$\mathsurround=0pt#1{#2}$}}
\newcommand{\clapint}[1]{\int\limits_{\mathclap{#1}}}
\begin{document}
\allowdisplaybreaks
\subjclass[2010]{42B20 (Primary) 42B25 (Secondary)}
\title{Sparse domination of sharp variational truncations}
\author{Fernanda Clara de França Silva}
\address{Universität Tübingen\\
  Mathematisches Institut\\
  Auf der Morgenstelle 10\\
  72076 Tübingen\\
  Germany 
}
\email{\mailto{fesi@fa.uni-tuebingen.de}}

\author{Pavel Zorin-Kranich}
\address{Universit\"at Bonn\\
  Mathematisches Institut\\
  Endenicher Allee 60\\
  53115 Bonn\\
  Germany
}
\email{\mailto{pzorin@uni-bonn.de}}
\urladdr{\url{http://www.math.uni-bonn.de/people/pzorin/}}
\begin{abstract}
We provide a versatile formulation of Lacey's recent sparse pointwise domination technique with a local weak type estimate on a nontangential maximal function as the only hypothesis.
We verify this hypothesis for sharp variational truncations of singular integrals in the case when unweighted $L^{2}$ estimates are available.
This extends previously known sharp weighted estimates for smooth variational truncations to the case of sharp variational truncations.
We also include a sparse domination result for iterated commutators of multilinear operators with BMO functions.
\end{abstract}
\thanks{First author partially supported by a CAPES/DAAD scholarship.}
\maketitle

\section{Introduction}
\subsection{Sparse domination}
Sparse domination has been introduced by Lerner \cite{MR3085756} in order to simplify the proof of Hyt\"onen's $A_{2}$ theorem (see \cite{MR3204859} for a comprehensive history of this result).
A new approach to sparse domination via weak type endpoint estimates has been recently discovered by Lacey \cite[Theorem 4.2]{arXiv:1501.05818}, quantitatively refined by Hytönen, Roncal, and Tapiola \cite[Theorem 2.4]{arXiv:1510.05789}, and streamlined by Lerner \cite{arXiv:1512.07247}.
Our first result is an abstract implementation of Lacey's argument that can be applied as a black box in a number of situations, for instance to multilinear operators (recovering the sparse domination result in \cite{arXiv:1512.02400}), to intrinsic square functions (see \cite{arXiv:1605.02936}, where the second author uses Theorem~\ref{thm:sparse-domination} to extend some results in \cite{arXiv:1505.00195}), and also to variational truncations of singular integrals that will be the second topic of this article.

We will use the following version of the nontangential maximal function.
Let $(X,\rho,\mu)$ be a space of homogeneous type (see Section~\ref{sec:prelim} for definitions) and let $F$ be a function on the set
\[
\cX := \{(x,s,t)\in X\times (0,\infty)\times (0,\infty) : s\leq t\}.
\]
We define the non-tangentional maximal operator (of aperture $a\geq 0$) localized to a set $Q\subset X$ by
\[
(\Nop_{a,Q} F)(x) := 1_{Q}(x) \sup_{y\in X,\rho(x,y)<as<at\leq \dist(y,X\setminus Q)} F(y,s,t).
\]
We will omit $Q$ from the notation if $Q=X$ and we will also omit $a$ if $a=1$.
Now we state our version of Lacey's sparse domination principle.
The notions of adjacent systems of dyadic cubes and sparse collections are recalled in Section~\ref{sec:prelim}.
\begin{theorem}
\label{thm:sparse-domination}
For every space of homogeneous type $(X,\rho,\mu)$ and every choice of adjacent systems of dyadic cubes $\cD^{\alpha}$ there exist $\epsilon,\eta>0$ such that the following holds.
Let $F:\cX\to [0,\infty]$ be a function that is monotonic in the sense that
\[
s\leq s'\leq t'\leq t \implies F(x,s',t')\leq F(x,s,t)
\]
and subadditive in the sense that
\[
s\leq s'\leq t \implies F(x,s,t)\leq F(x,s,s')+F(x,s',t).
\]
Suppose that for every dyadic cube $Q$ there exists $c_{Q}\geq 0$ such that
\begin{equation}
\label{eq:local-weak(1,1)-bound}
\mu\{\Nop_{Q}F > c_{Q}\} \leq \epsilon \mu(Q).
\end{equation}
Then there exist $\eta$-sparse collections $\mathcal{S}^{\alpha,k_{0}} \subset \mathcal{D}^{\alpha}$ of cubes such that
\begin{equation}
\label{eq:sparse-domination}
\Nop F \leq \liminf_{k_{0}\to -\infty} \sum_{\alpha} \sum_{Q\in\mathcal{S}^{\alpha,k_{0}}} 1_{Q} c_{Q}
\end{equation}
holds pointwise almost everywhere.
\end{theorem}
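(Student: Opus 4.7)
The plan is to adapt Lacey's stopping-time argument, exploiting the monotonicity and subadditivity of $F$ to decompose $F(y,s,t)$ along a chain of stopping cubes. For each grid index $\alpha$ and initial generation $k_{0}$, I would build $\mathcal{S}^{\alpha,k_{0}}$ by Calderón--Zygmund iteration: seed it with all cubes of $\mathcal{D}^{\alpha}$ of generation $k_{0}$, and for each selected cube $Q$ add as \emph{stopping children} the maximal $Q' \subsetneq Q$ in $\mathcal{D}^{\alpha}$ satisfying
\[
\mu\bigl(Q' \cap \{\mathcal{N}_{Q}F > c_{Q}\}\bigr) > \tfrac{1}{2}\mu(Q').
\]
Hypothesis \eqref{eq:local-weak(1,1)-bound} bounds the total measure of these children by $2\epsilon\mu(Q)$, so the $\eta$-sparse property follows once $\epsilon$ is chosen small enough.

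For the pointwise inequality, fix $x$ outside a suitable null set and a triple $(y,s,t)$ in the nontangential cone at $x$, so that $\rho(x,y)<s\leq t$. For every sufficiently negative $k_{0}$, $x$ lies in some $\mathcal{S}^{\alpha,k_{0}}$-cube $Q^{(0)}$ of generation $k_{0}$ with $t \leq \dist(y,X\setminus Q^{(0)})$ (for some $\alpha$): this is the source of the $\liminf$ as $k_{0}\to-\infty$. I then iterate: at step $i$, the adjacent-systems property supplies a grid $\alpha_{i}$ and a cube $Q^{(i)} \in \mathcal{D}^{\alpha_{i}}$ containing $x$ at scale comparable to the running time $t_{i}$ with $y$ deep inside $Q^{(i)}$; declare $Q^{(i+1)}$ to be the stopping child of $Q^{(i)}$ in $\mathcal{D}^{\alpha_{i}}$ through $x$. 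This produces cut-off times $s=t_{N}\leq \cdots \leq t_{0}=t$ with $t_{i}$ comparable to the side length of $Q^{(i)}$, and iterated subadditivity gives
\[
F(y,s,t) \leq \sum_{i=0}^{N-1} F(y,t_{i+1},t_{i}).
\]

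Each summand $F(y,t_{i+1},t_{i})$ is to be bounded by $c_{Q^{(i)}}$. When $x$ lies outside the exceptional set for $Q^{(i)}$, the triple $(y,t_{i+1},t_{i})$ is admissible in $\mathcal{N}_{Q^{(i)}}F(x)$ by the scale and distance conditions built into the chain, so $F(y,t_{i+1},t_{i}) \leq \mathcal{N}_{Q^{(i)}}F(x) \leq c_{Q^{(i)}}$ directly. When $x$ lies inside the exceptional set, $x$ belongs to the stopping child $Q^{(i+1)}$, and the piece at level $i$ is absorbed by an auxiliary base-point argument: pick a nearby $z$ in the complement of all stopping children of $Q^{(i)}$ (whose density is at least $1/2$), use monotonicity of $F$ to slightly enlarge $(y,t_{i+1},t_{i})$ so that it becomes admissible in $\mathcal{N}_{Q^{(i)}}F(z)$, and conclude $F(y,t_{i+1},t_{i}) \leq c_{Q^{(i)}}$. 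Summing and using that $x$ lies in every $Q^{(i)}$ of the chain gives the pointwise bound with $1_{Q^{(i)}}(x) = 1$.

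\textbf{Main obstacle.} The most delicate point is coordinating nontangential admissibility with the Calderón--Zygmund density argument at every scale. One must set constants so that: (i)~the aperture $a = 1$ in both hypothesis and conclusion is preserved across the telescope, using monotonicity to absorb small scale mismatches when switching from grid $\alpha_{i}$ to $\alpha_{i+1}$; (ii)~the auxiliary base point $z$ can always be chosen, which forces $\epsilon$ to be small relative to the doubling constant of $\mu$; and (iii)~the chains actually terminate on a set of full measure in $x$, so that the telescoping sum is finite almost everywhere. The literature on Christ-type and adjacent dyadic systems provides enough slack to balance all of these parameters consistently.
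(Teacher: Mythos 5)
Your proposal has two genuine gaps, both traceable to the choice of a \emph{density-based} Calderón--Zygmund selection rule instead of a pointwise stopping time. First, the ``auxiliary base point'' step fails. To bound $F(y,t_{i+1},t_i)$ by $\Nop_{Q^{(i)}}F(z)$ you need a triple $(y,s,t)$ with $\rho(z,y)<s\le t_{i+1}\le t_i\le t\le \dist(y,X\setminus Q^{(i)})$; monotonicity only allows you to \emph{decrease} the lower endpoint and \emph{increase} the upper one, so there is no slack at the bottom: $z$ must lie strictly inside $B(y,t_{i+1})$. Your selection rule gives no control there. A point outside all stopping children need not satisfy $\Nop_{Q^{(i)}}F(z)\le c_{Q^{(i)}}$ (the children only locate where that level set has density $>1/2$), and the level set $\{\Nop_{Q^{(i)}}F>c_{Q^{(i)}}\}$, though of measure $\le\epsilon\mu(Q^{(i)})$, can completely cover the much smaller ball $B(y,t_{i+1})$ when $Q^{(i+1)}$ sits many generations below $Q^{(i)}$; and a genuinely good point in the non-selected parent of $Q^{(i+1)}$ only satisfies $\rho(z,y)\lesssim t_{i+1}$ with a constant larger than $1$, which the aperture-$1$ cone does not tolerate. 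Second, the cubes $Q^{(i)}$ in your telescoping are produced by the adjacent-systems covering property around $y$ at scales $t_i$ depending on the particular cone point $(y,s,t)$, whereas your collections $\cS^{\alpha,k_0}$ are built by per-grid density stopping from generation $k_0$; nothing guarantees $Q^{(i)}\in\cS^{\alpha_i,k_0}$, so the resulting sum $\sum_i c_{Q^{(i)}}1_{Q^{(i)}}(x)$ is not dominated by the sparse operator you constructed. The termination issue (iii) is also left unresolved, while the paper handles it by passing to truncations $F_\tau$ and the $\liminf$ in $k_0$.

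The paper's proof avoids both problems at once. For a fixed cube $Q_0$ and $\lambda=c_{Q_0}$ it defines, for each $y\in Q_0$, the stopping time $\sigma(y)=\inf\{\tau>0: F(y,\tau,\dist(y,\complement Q_0))\le\lambda\}$ and chooses (via the adjacent systems) a dyadic cube $Q_y\supset B(y,2\sigma(y))$ of side length $\lesssim\sigma(y)$; the maximal such cubes form $\calQ$. Then the ``outer'' piece $F(y,\dist(y,\complement Q_y),\dist(y,\complement Q_0))\le\lambda$ holds by the very definition of the infimum --- no auxiliary good point is needed --- and a single subadditive split gives the recursion $\Nop_{Q_0}\tilde F\le 1_{Q_0}(\lambda+\sup_{Q\in\calQ}\Nop_Q\tilde F)$. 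Sparsity comes from the inclusion $\bigcup_{y}B(y,\sigma(y))\subset\{\Nop_{Q_0}F>\lambda\}$, which is exactly where the aperture-$1$ cone in the hypothesis is used, together with \eqref{eq:local-weak(1,1)-bound}. Crucially, the selected cubes $\calQ(Q)$ themselves become the next level of the iteration $\cP_{k}\mapsto\cP_{k+1}$, so the cubes appearing in the pointwise recursion are precisely the members of the sparse collection, each carrying its own constant $c_Q$; the Carleson property of $\cS=\bigcup_k\cP^*_k$ then requires the separate geometric argument about $\calQ$-descendants staying in a fixed dilate. If you want to salvage your scheme, you would have to replace the density stopping by this pointwise stopping time (or otherwise arrange that the selected cube contains a ball around $y$ of radius comparable to its side), which is exactly the content of the paper's Lemma~\ref{lem:recursion}.
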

One situation in which Theorem~\ref{thm:sparse-domination} does not apply as a black box is that of commutators of (multi)linear operators with BMO functions, and we provide the necessary modifications to the argument in Section~\ref{sec:commutator}, where a multilinear extension of \cite[Theorem 1.1]{arXiv:1604.01334} is proved.

\subsection{Variational truncations of singular integrals}
In this part of the article we return to the space $X=\R^{d}$ with the Euclidean distance and the Lebesgue measure.
Let $K$ be an $\omega$-Calder\'on--Zygmund (CZ) kernel (see Section~\ref{sec:prelim} for definitions) and consider the corresponding truncation operator given by
\begin{equation}
\label{eq:Top}
\Top f(x,s,t) := \int_{s<|x-y|<t} K(x,y) f(y) \dif y.
\end{equation}
For $1\leq r<\infty$ we define the homogeneous%
\footnote{The dot in the notation ``$\dot{V}^{r}$'' is not standard and is motivated by the embeddings $\dot{B}_{r}^{1/r,1} \to \dot{V}^{r} \to \dot{B}_{r}^{1/r,\infty}$ between the spaces of bounded homogeneous variation and homogeneous Besov spaces \cite{MR0380389}.}
variation operator, acting on functions on $\cX$, by
\[
(\hVop F)(x,s,t) := \sup_{s\leq t_{1}<\dots <t_{J} \leq t} \big( \sum_{j=1}^{J-1} |F(x,t_{j},t_{j+1})|^{r} \big)^{1/r},
\]
and analogously for $r=\infty$ with the $\ell^{\infty}$ norm in place of the $\ell^{r}$ norm.

It is known that, if the kernel $K$ is of convolution type, i.e.\ $K(x,y)=k(x-y)$, satisfies the cancellation condition
\[
\int_{\partial B(0,t)} k(x) \dif x = 0,
\qquad t>0,
\]
and satisfies one of the following additional conditions:
\begin{enumerate}
\item\label{lem:VT-unweighted:homogeneous} the kernel $k$ is homogeneous of degree $-d$, that is, $k(tx)=t^{-d}k(x)$ for $t>0$, or
\item\label{lem:VT-unweighted:smooth} the kernel $k$ satisfies the smoothness condition $|k'(y)| \lesssim |y|^{-d-1}$,
\end{enumerate}
then, for $r>2$, the operator $\Nop_{0} \circ \hVop \circ \Top$ is bounded on $L^{p}(\R^{d})$ and has weak type (1,1).
The strong type bounds in the case \ref{lem:VT-unweighted:homogeneous} have been proved in \cite[Theorem A]{MR1953540} (see also \cite{MR2434308} and \cite{arXiv:1508.03872}) and in the case \ref{lem:VT-unweighted:smooth} in \cite[Theorem A.1]{arXiv:1512.07523}.
In both cases the $L^{p}$ bounds imply the weak type $(1,1)$ bound by \cite[Theorem B]{MR1953540} (note that the assumption (1.8) in that article follows from the Dini condition).

Our second main result is that these bounds remain true with $\Nop_{0}$ replaced by $\Nop_{a}$, $a>0$.
\begin{theorem}
\label{thm:nontangentional-variation}
Let $K$ be an $\omega$-CZ kernel on $\R^{d}$, $r>2$, and assume that $\Nop_{0} \circ \hVop \circ \Top$ has weak type (1,1).
Then also $\Nop_{a} \circ \hVop \circ \Top$ has weak type (1,1) for every $a>0$.
\end{theorem}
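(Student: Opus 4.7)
The plan is to prove the pointwise domination
\[
\Nop_{a}(\hVop \Top f)(x) \leq C\bigl(\Nop_{0}(\hVop \Top f)(x) + M f(x)\bigr),
\]
where $M$ denotes the Hardy--Littlewood maximal operator. Given this, the weak type $(1,1)$ conclusion follows at once from the hypothesis on $\Nop_{0}\circ\hVop\circ\Top$ and the classical weak type $(1,1)$ of $M$.

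Fix $x\in\R^{d}$ and $(y,s,t)\in\cX$ with $|x-y|<as\leq at$. For each $z\in\R^{d}$ set $h_{z}(u) := \int_{|z-w|>u}K(z,w)f(w)\,dw$, so that $\Top f(z,s',t')=h_{z}(s')-h_{z}(t')$ and $(\hVop \Top f)(z,s,t)=V^{r}(h_{z}:[s,t])$. For any partition $s\leq t_{1}<\dots<t_{J}\leq t$, Minkowski's inequality in $\ell^{r}$ yields
\[
\Big(\sum_{j}|\Top f(y,t_{j},t_{j+1})|^{r}\Big)^{1/r} \leq V^{r}(h_{x}:[s,t]) + V^{r}(g:[s,t]),
\]
where $g:=h_{y}-h_{x}$. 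The first summand is at most $\Nop_{0}(\hVop \Top f)(x)$, so the task reduces to showing $V^{r}(g:[s,t])\leq C Mf(x)$ uniformly in the data. Decompose $g=g_{1}+g_{2}$ with
\begin{align*}
g_{1}(u) &:= \int K(y,w)\bigl(1_{|y-w|>u}-1_{|x-w|>u}\bigr)f(w)\,dw,\\
g_{2}(u) &:= \int (K(y,w)-K(x,w))\,1_{|x-w|>u}f(w)\,dw.
\end{align*}
The total variation of $g_{2}$ on $[s,t]$ is at most $\int|K(y,w)-K(x,w)||f(w)|\,dw$, which the Dini condition on $\omega$ combined with $|x-y|<as$ bound by $\sum_{k\geq 0}\omega(2^{-k}a)Mf(x)\leq CMf(x)$.

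For $g_{1}$ I would introduce the smooth analog
\[
\tilde g_{1}(u) := \int K(y,w)\bigl(\psi(|y-w|/u)-\psi(|x-w|/u)\bigr)f(w)\,dw
\]
for a fixed smooth profile $\psi$ with $\psi=0$ on $[0,1/2]$ and $\psi=1$ on $[1,\infty)$. Differentiating under the integral and using $\bigl||y-w|-|x-w|\bigr|\leq|x-y|<as$ give $|\tilde g_{1}'(u)|\leq C(as/u^{2})Mf(x)$, whence $V^{1}(\tilde g_{1}:[s,t])\leq C a Mf(x)$. The residual $g_{1}-\tilde g_{1}$ equals $A(y,\cdot)-A(x,\cdot)$ where
\[
A(z,u) := \int K(y,w)\bigl(1_{|z-w|>u}-\psi(|z-w|/u)\bigr)f(w)\,dw
\]
is the sharp-minus-smooth single-scale truncation of the Calder\'on--Zygmund kernel $K(y,\cdot)$ centred at $z$; its $r$-variation on $[s,t]$ is bounded by $CMf(x)$ via a L\'epingle-type jump inequality for sharp CZ truncations, which is available precisely because $r>2$.

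The main technical obstacle is this final estimate. A na\"ive $\ell^{1}$-summation of the pointwise bound $|A(z,u)|\leq CMf(x)$ at partition points diverges for fine partitions, so one has to exploit $r>2$ together with the variational jump inequalities for sharp CZ truncations that also underpin the hypothesis of the theorem (cf.\ \cite[Theorem B]{MR1953540} and \cite[Theorem A.1]{arXiv:1512.07523}). Once the pointwise domination is in hand, the theorem follows.
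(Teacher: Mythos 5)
Your reduction has the same general shape as the paper's Lemma~\ref{lem:semicontinuity} (compare the sharp truncations at the cone point $y$ with those at the vertex $x$, and split the difference into a kernel-smoothness part, handled by the Dini condition and $Mf$, and a truncation-geometry part), but the final step contains a genuine gap: you claim the pointwise bound $V^{r}(A(z,\cdot):[s,t])\leq C\,Mf(x)$ ``via a L\'epingle-type jump inequality''. L\'epingle-type and jump inequalities are $L^{p}$/weak-$L^{1}$ \emph{operator} bounds; they never yield a pointwise domination by the maximal function, and in fact your target inequality $\Nop_{a}(\hVop\Top f)(x)\leq C\big(\Nop_{0}(\hVop\Top f)(x)+Mf(x)\big)$ is false. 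Take $d=1$, $K(x,w)=(x-w)^{-1}$, and let $f$ consist of $J$ mirror-symmetric pairs of narrow bumps of mass $\tfrac{c}{2}R_{k}$ placed at $x\pm R_{k}$, $R_{k}=4^{k}\epsilon$. Every sharp truncation centered at $x$ annihilates each pair, so $\Nop_{0}(\hVop\Top f)(x)=o(1)$, and $Mf(x)\sim c$; but at $y=x+\tfrac{a\epsilon}{2}$ the two bumps of the $k$-th pair enter the truncation at the distinct radii $R_{k}\mp\tfrac{a\epsilon}{2}$, so choosing partition points straddling these radii gives an oscillation of size $\sim c$ at each of the $J$ scales, whence $\hVop\Top f(y,\epsilon,2R_{J})\gtrsim J^{1/r}c$ and $\Nop_{a}(\hVop\Top f)(x)\gtrsim J^{1/r}c$, which exceeds any fixed multiple of the right-hand side once $J$ is large. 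The long (lacunary-scale) variation of the sharp-minus-smooth single-scale truncations is exactly the object that cannot be controlled pointwise by $Mf$.

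What \emph{is} true pointwise --- and this is how the paper proceeds --- is that the truncation-geometry error is bounded by $r'C_{K}\sup_{s\leq t'\leq t}A_{t'}|f|(x)$ plus $C_{K}\big(\hVop\Aop|f|(x,s,t)+\hVop\Aop|f|(y,s,t)\big)$, i.e.\ by the $r$-variation of the averaging operator at the two points (Lemma~\ref{lem:semicontinuity}); one then concludes at the level of weak-type norms, using that $\Nop_{a}\circ\hVop\circ\Aop$ has weak type $(1,1)$ (Lemma~\ref{lem:VA-unweighted}, where L\'epingle's inequality for martingales enters, transferred via an uncentered square function) together with the Hardy--Littlewood maximal inequality. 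Your counterexample-sensitive step must be replaced by such an operator-level argument; no appeal to variational inequalities for sharp CZ truncations (which would in any case be close to circular) is needed. Two secondary points are fixable but should be addressed: your estimate for $g_{2}$ uses \eqref{eq:smoothness} also in the range $|x-w|\leq 2|x-y|$ where it is unavailable, and for $a\geq 1$ the splitting $g=g_{1}+g_{2}$ produces integrals that need not converge absolutely near $w=y$; both disappear if one first reduces to a small aperture (say $|x-y|<s/2$) using the standard comparability of nontangential maximal operators with different apertures.
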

The novelty of this result are the sharp truncations in \eqref{eq:Top}.
An analogous result with $1_{(s,t)}$ replaced by appropriately scaled smooth truncations is implicitly contained in \cite{MR3065022}.

The appearance of cones with positive aperture in Theorem~\ref{thm:nontangentional-variation} allows us to apply Theorem~\ref{thm:sparse-domination} to variational truncations of singular integrals.
Indeed, the localized operator $\Nop_{Q}\circ\hVop\circ\Top$ is dominated by the global operator $\Nop\circ\hVop\circ\Top$, and therefore has weak type (1,1) uniformly in $Q$.
On the other hand, the localized operator depends only on the values of $f$ on $Q$, and therefore \eqref{eq:local-weak(1,1)-bound} is satisfied for the function $F=\hVop\Top f$ with $c_{Q}=\frac{C}{\epsilon} |Q|^{-1} \int_{Q} |f|$.
Therefore, $\Nop \circ \hVop \circ \Top f$ can be estimated by sparse operators of the form \eqref{eq:sparse-operator}.

Sparse operators are known to satisfy very good weighted estimates, the currently best results can be found in \cite{arXiv:1509.00273} ($L^{p}$ bounds with $p>1$) and \cite{MR3455749} (the weak type (1,1) endpoint).
Consequently, we obtain sharp weighted estimates for the variationally truncated operators $\Nop \circ \hVop \circ \Top$, unifying the previous results for sharp truncations with unspecified dependence on the characteristic of the weight \cite{MR3283159,arXiv:1511.05129} and for smooth truncations with sharp dependence on the characteristic of the weight \cite{MR3065022}.

\section{Notation and preliminaries}
\label{sec:prelim}
\subsection{Spaces of homogeneous type}
A \emph{quasi-metric} on a set $X$ is a function $\rho : X\times X\to [0,\infty)$ such that $\rho(x,y)=0 \iff x=y$ that is symmetric and satisfies the quasi-triangle inequality
\[
\rho(x,y) \leq A_{0} (\rho(x,z) + \rho(z,y))
\quad\text{for all}\quad
x,y,z\in X
\]
with some $A_{0}<\infty$ independent of $x,y,z$.
A measure $\mu$ on a quasi-metric space $(X,\rho)$ is called \emph{doubling} if there exists $A_{1}<\infty$ such that
\[
\mu(B(x,2r)) \leq A_{1} \mu(B(x,r))
\quad\text{for all}\quad
x\in X,r>0,
\]
where $B(x,r)=\{y\in X : \rho(x,y)<r\}$ are the quasimetric balls of radius $r$.
These balls need not be open, but can be made open upon passing to an equivalent quasi-metric \cite{MR546295}.
A tuple $(X,\rho,\mu)$ consisting of a set $X$, a quasi-metric $\rho$, and a doubling measure $\mu$ is called a \emph{space of homogeneous type}.
We will frequently denote the measure of a set $Q$ by $|Q|=\mu(Q)$ and the average of a function $f$ over $Q$ by $\mean{f}=|Q|^{-1} \int_{Q} f \dif\mu$.

\subsection{Adjacent systems of dyadic cubes}
Filtrations on spaces of homogeneous type that closely resemble dyadic filtrations on $\R^{d}$ have been first constructed by Christ \cite{MR1096400} and are now commonly known as \emph{Christ cubes}.
For our purposes we do not need the small boundary property enjoyed by the Christ cubes, but we do need adjacent systems of cubes that have covering properties similar to those of shifted dyadic cubes in $\R^{d}$.
Such systems have been constructed in \cite{MR2901199}.
\begin{definition}
Let $(X,\mu)$ be a measure space.
A \emph{system of dyadic sets} $\cD$ consists of a sequence $(\cD_{k})_{k\in\Z}$ of collections of measurable subsets of $X$ such that for all $l\leq k$, $l,k\in\Z$
\begin{enumerate}
\item\label{cc:cover} $\mu(X\setminus\cup_{Q\in\cD_{k}}Q)=0$.
\item\label{cc:nested} For each $Q\in\cD_{k}$ and $Q'\in\cD_{l}$ either $Q\subseteq Q'$ or $Q\cap Q'=\emptyset$.
\end{enumerate}
\end{definition}
By an abuse of notation the sets $Q$ remember their generation $k(Q)$ (the \emph{unique} number such that $Q\in\cD_{k(Q)}$), even though it is allowed that the same $Q$ (viewed as a set) may occur in different generations $\cD_{k}$.
The relation $Q'\subseteq Q$ includes the inequality $k(Q')\geq k(Q)$ and the relation $Q'=Q$ includes $k(Q')=k(Q)$.
\begin{definition}
\label{def:cc}
Let $(X,\rho,\mu)$ be a quasi-metric measure space and assume that the measure $\mu$ has full support.
A \emph{system of dyadic cubes} is a system of dyadic sets $\cD$ such that for some $0<\delta<1$, $0<c_{1} \leq C_{1}<\infty$ and all $k\in\Z$ and $Q=Q_{\alpha}^{k}\in\cD_{k}$ there exists $z=z(Q)=z_{\alpha}^{k}\in X$ such that $B(z,a_{0}\delta^{k}) \subseteq Q \subseteq B(z,C_{1}\delta^{k})$.
\end{definition}

\begin{definition}
Let $(X,\rho,\mu)$ be a quasi-metric measure space and assume that the measure $\mu$ has full support.
Systems of dyadic cubes $\cD^{\alpha}$, $\alpha\in A$, are said to be \emph{adjacent} if there exists $C_{3}<\infty$ such that for every $z\in X$ and $r>0$ there exist $\alpha\in A$, $k\in\Z$, and $Q\in\cD^{\alpha}_{k}$ such that $B(z,r) \subset Q \subset B(z,C_{3}r)$.
\end{definition}
It is known that in every space of homogeneous type there exists a finite collection of adjacent systems systems of dyadic cubes \cite[Theorem 4.1]{MR2901199}.

\begin{example}
Let $X=\R^{d}$ with the Euclidean distance and the Lebesgue measure.
For each $\alpha\in\{0,1,2\}^{d}$ the corresponding \emph{shifted system of dyadic cubes} is given by
\[
\cD^{\alpha} = \{ 2^{-k}([0,1)^{d} + m + (-1)^{k}\frac13 \alpha), k\in\Z, m\in\Z^{d}\}.
\]
Then the systems $\mathcal{D}^{\alpha}$, $\alpha\in\{0,1,2\}^{d}$, are adjacent. 
In fact, on $\R^{d}$ one can construct $d+1$ shifted systems of dyadic cubes that are adjacent \cite{MR1993970}. 
\end{example}

\begin{example}
Let $(X,\mu)$ be a measure space and let $\cD$ be a system of dyadic sets.
Define a metric on $X$ by
\[
\rho(x,x') := \inf\{ 2^{-k} : \exists Q\in\cD_{k} : x,x'\in Q\}.
\]
Then the system $\cD$ is a system of dyadic cubes with respect to this metric, and this system is adjacent.
For instance, the standard dyadic cubes in $\R^{d}$ are an adjacent system of dyadic cubes with respect to the dyadic metric.
This does not preclude one from considering CZ operators on $\R^{d}$ with respect to the Euclidean metric and allows one to recover Lerner's version \cite{arXiv:1512.07247} of the pointwise sparse domination theorem from Theorem~\ref{thm:sparse-domination}.
\end{example}

\subsection{Sparse and Carleson collections}
Let $\cD$ be a system of dyadic sets on a measure space $(X,\mu)$.
A collection $\cS\subset\cD$ is called
\begin{enumerate}
\item \emph{$\eta$-sparse} if there exist pairwise disjoint subsets $E(Q)\subset Q\in\mathcal{S}$ with $|E(Q)| \geq \eta |Q|$ and
\item \emph{$\Lambda$-Carleson} if one has $\sum_{Q'\subset Q, Q'\in\cS} \mu(Q') \leq \Lambda \mu(Q)$ for all $Q\in\cD$.
\end{enumerate}
It is known that a collection is $\eta$-sparse if and only if it is $1/\eta$-Carleson \cite[\textsection 6.1]{arXiv:1508.05639}.
The corresponding \emph{sparse operator} is given by
\begin{equation}
\label{eq:sparse-operator}
A_{\mathcal{S}}f = \sum_{Q\in\mathcal{S}} 1_{Q} \<f\>_{Q},
\quad\text{where}\quad
\<f\>_{Q} = |Q|^{-1} \int_{Q} f.
\end{equation}

\subsection{$\omega$-Calder\'on--Zygmund kernels}
An $\omega$-Calder\'on--Zygmund (CZ) kernel is a function $K:\R^{d}\times \R^{d} \setminus (\mathrm{diagonal}) \to \C$ that satisfies the size estimate
\begin{equation}
\label{eq:size}
|K(x,y)| \leq \frac{C_{K}}{|x-y|^{d}}
\end{equation}
and the smoothness estimate
\begin{equation}
\label{eq:smoothness}
|K(x,y)-K(x',y)| + |K(y,x)-K(y,x')| \leq \omega\big( \frac{|x-x'|}{|x-y|} \big) \frac{1}{|x-y|^{d}}
\end{equation}
for $|x-y|>2|x-x'|>0$ with some \emph{modulus of continuity} $\omega :[0,\infty) \to [0,\infty)$ (that is, a subadditive function: $\omega(t+s)\leq \omega(t)+\omega(s)$ for all $s,t\geq 0$) that satisfies the \emph{Dini condition}
\begin{equation}
\label{eq:Dini}
\|\omega\|_{\Dini} := \int_{0}^{1} \omega(t) \frac{\dif t}{t} < \infty.
\end{equation}

\section{Uncentered variational estimates}
\label{sec:no-weights}
Consider the averaging operator
\begin{equation}
\label{eq:Aop}
\Aop f(x,s,t) := A_{t}f(x)-A_{s}f(x),
\quad
A_{t}f(x) := \fint_{|x-y|<t} f(x+y) \dif y.
\end{equation}
It satisfies the following uncentered variational estimates.
\begin{lemma}
\label{lem:VA-unweighted}
Let $r>2$ and $a\geq 0$.
Then $\Nop_{a}\circ \hVop\circ\Aop$ is bounded on $L^{p}(\R^{d})$, $1<p<\infty$, and has weak type $(1,1)$.
\end{lemma}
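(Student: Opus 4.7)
The plan is to reduce the claim to the classical centered $r$-variational inequality for ball averages on $\R^d$. The case $a = 0$ amounts to $\Nop_0 \hVop \Aop f(x) = \hV{A_u f(x)}{u > 0}$, which for $r > 2$ is bounded on $L^p$ for $1 < p < \infty$ and has weak type $(1,1)$; this is the standard $r$-variational inequality of Jones--Seeger--Wright. To pass to $a > 0$, I would establish the pointwise domination
\[
\Nop_a \hVop \Aop f(x) \lesssim_a \Nop_0 \hVop \Aop f(x) + Mf(x),
\]
where $M$ is the Hardy--Littlewood maximal function; together with the centered case and the classical maximal inequality, this yields the $L^p$ and weak $(1,1)$ bounds.

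To prove the pointwise domination, fix $x$ and $(y, s, t) \in \cX$ with $|x - y| < as \leq at$. The triangle inequality for the $r$-variation gives
\[
\hVop \Aop f(y, s, t) \leq \hV{A_u f(x)}{u \in [s, t]} + \hV{E(u)}{u \in [s, t]}, \qquad E(u) := A_u f(y) - A_u f(x),
\]
and the first term is at most $\Nop_0 \hVop \Aop f(x)$. Writing
\[
E(u) = \frac{1}{|B(x,u)|} \int f(z) \bigl( \mathbf{1}_{B(y,u)}(z) - \mathbf{1}_{B(x,u)}(z) \bigr) \dif z,
\]
one sees that $E(u)$ is the normalized integral of $f$ over the symmetric difference $B(y,u) \triangle B(x,u)$, which for $|x - y| < au$ lies in the annulus $B(x, (1+a)u) \setminus B(x, (1-a)u)$ of thickness $\sim au$, so $|E(u)| \lesssim_a Mf(x)$ pointwise.

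The main obstacle is showing $\hV{E(u)}{u \in [s, \infty)} \lesssim_a Mf(x)$ uniformly over $y$ with $|x - y| < as$. I would attack this with a long/short variation decomposition across dyadic scales $\{2^k : 2^k \geq s\}$. For the short variation on each $[2^k, 2^{k+1}]$, the bound $V^r \leq V^1 = \int |E'(u)| \dif u$ together with the derivative formula
\[
\frac{\dif}{\dif u} A_u f(z) = \frac{d}{u} \bigl( S_u f(z) - A_u f(z) \bigr),
\]
where $S_u f$ denotes the spherical mean, reduces matters to a polar-coordinates estimate controlled by $Mf(x)$. For the long variation across lacunary scales, the annular supports at distinct scales $2^k$ are essentially disjoint around $\partial B(x, 2^k)$; this disjointness, combined with the strict inequality $r > 2$, should yield a quasi-orthogonal $\ell^r$ estimate by $Mf(x)$ and prevent an unbounded accumulation across scales. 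The delicate point is to combine the annular disjointness with the $r > 2$ gap tightly enough to kill the logarithmic factor that would otherwise arise from summing short-variation contributions over infinitely many dyadic scales.
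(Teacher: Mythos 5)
Your reduction stands or falls with the error estimate $\hV{E(u)}{u\in[s,t]}\lesssim_{a}Mf(x)$, where $E(u)=A_{u}f(y)-A_{u}f(x)$, and this estimate is not merely ``delicate'': it is false. Take $d=1$, $x=0$, $y=h$, and let $f=\sum_{k\leq K}f_{k}$ where $f_{k}\geq 0$ is a bump of total mass $2^{k}$ concentrated at the point $1.5\cdot 2^{k}$. Then $Mf(x)\simeq 1$ uniformly in $K$. On the other hand, for $u\in(1.5\cdot 2^{k}-h,\,1.5\cdot 2^{k})$ the $k$-th bump lies in $B(y,u)$ but not in $B(x,u)$, so $E(u)\approx 2^{k}/(2u)\approx 1/3$, while $E$ vanishes between these windows; hence the $r$-variation of $E$ over $[s,2^{K}]$ is $\gtrsim K^{1/r}\to\infty$. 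This is exactly the phenomenon you flag: the per-scale bound $|E(u)|\lesssim_{a}Mf(x)$ carries no decay in the scale (the symmetric difference is a thin shell, and $Mf$ does not control thin-shell masses), and the ``annular disjointness plus $r>2$'' heuristic has no pointwise content --- quasi-orthogonality is an $L^{2}$ phenomenon, so disjoint supports of the measures defining $E(2^{k})$ give no $\ell^{r}$ gain for the numerical sequence $(E(2^{k}))_{k}$. Any true pointwise substitute would at the very least have to recycle the centered variation at $x$ on the right-hand side (in the example above $\hV{A_{u}f(x)}{u>0}$ is equally large), and your sketch offers no argument for such a bound; note also that the usual change-of-aperture results pass only between positive apertures, so one cannot simply upgrade the $a=0$ (Jones--Seeger--Wright) bound to $a>0$ by a maximal-function trick.

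This is precisely why the paper does not argue pointwise. Its proof works at the level of norms: the dyadic/martingale model of $\Nop_{a}\circ\hVop\circ\Aop$ is handled by L\'epingle's inequality, the comparison between ball averages inside a cone of aperture $a$ and martingale averages is done via the \emph{uncentered square function} of \cite[Theorem 1.4]{arxiv:1409.7120} (with $3\mathcal{Q}_{k}$ enlarged to $C\mathcal{Q}_{k}$, $C\sim 100(a+1)$, to accommodate the aperture), and the weak type $(1,1)$ bound comes from \cite[Proposition 5.1]{arxiv:1409.7120}; alternatively, once some positive aperture is controlled, one may change the aperture as in \cite[\textsection II.2.5.1]{MR1232192}. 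So your plan for $a=0$ is fine, but the passage to $a>0$ needs an $L^{p}$/weak-$L^{1}$ square-function comparison rather than the pointwise domination by $\Nop_{0}\hVop\Aop f+Mf$ with error controlled by $Mf$ alone, which the counterexample rules out.
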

Variational estimates for averaging operators go back to \cite[Section 3]{MR1019960}.
The only new aspect of Lemma~\ref{lem:VA-unweighted} is that the variations are maximized over a conical region when $a>0$.
This is easy to achieve using the uncentered square function from \cite{arxiv:1409.7120}.
\begin{proof}[Sketch of proof]
The $L^{p}$, $1<p<\infty$, bound for the dyadic version of this operator is a direct consequence of L\'epingle's inequality for martingales.
The real version can be compared with the dyadic version using the uncentered square function from \cite[Theorem 1.4]{arxiv:1409.7120}.
Finally, the weak type $(1,1)$ bound follows by \cite[Proposition 5.1]{arxiv:1409.7120}.

Note that the results cited from \cite{arxiv:1409.7120} continue to hold with $3\mathcal{Q}_{k}$ replaced by $C\mathcal{Q}_{k}$ in the definitions of $\tilde S_{k}$ and $\tilde R_{k}$ for an arbitrary $C$; in our case we can take e.g.\ $C=100(a+1)$.

Alternatively, note that $\Nop_{a}$ can be seen as the usual nontangential maximal operator of aperture $a$ applied to the function $(x,s) \mapsto \sup_{t>s} F(x,s,t)$.
Hence the operator $\Nop_{a}\circ\hVop\circ\Aop$ has weak type $(1,1)$/strong type $(p,p)$ for all $a>0$ provided that this holds for some $a>0$, see e.g.\ \cite[\textsection II.2.5.1]{MR1232192}.
\end{proof}

The next lemma compares variational truncations of $\omega$-CZ kernels at nearby points.
The case $r=\infty$ of this lemma appeared in \cite[Lemma 2.3]{arXiv:1510.05789}.
\begin{lemma}
\label{lem:semicontinuity}
Let $r>1$, $x,x'\in\R^{d}$, $0<\epsilon\leq \delta\leq\infty$, and suppose $|x-x'|\leq\epsilon/2$.
Let also $K$ be an $\omega$-CZ kernel.
Then
\begin{multline*}
|\hVop\Top f(x,\epsilon,\delta)-\hVop\Top f(x',\epsilon,\delta)|
\lesssim_{d}
(\|\omega\|_{\Dini}+r' C_{K}) \sup_{\epsilon\leq t\leq \delta} A_{t}|f|(x)\\
+
C_{K} (\hVop\Aop |f|(x,\epsilon,\delta)+\hVop\Aop |f|(x',\epsilon,\delta))
\end{multline*}
\end{lemma}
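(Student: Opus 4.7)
The plan is to estimate $|\hVop\Top f(x,\epsilon,\delta)-\hVop\Top f(x',\epsilon,\delta)|$ via the reverse triangle inequality for the $V^r$ semi-norm. Setting $\Delta(s,t) := \Top f(x,s,t) - \Top f(x',s,t)$, the quantity is bounded by $V^r(\Delta,[\epsilon,\delta])$, and I decompose $\Delta = \Delta_K + \Delta_R$ with
\[
\Delta_K(s,t) := \int_{s<|x-y|<t} [K(x,y)-K(x',y)] f(y) \, dy
\]
being the kernel-difference on the common annulus at $x$, and
\[
\Delta_R(s,t) := \int K(x',y) f(y)\, [\mathbf{1}_{s<|x-y|<t} - \mathbf{1}_{s<|x'-y|<t}] \, dy
\]
the region-difference with the $x'$-kernel; subadditivity of $V^r$ splits the task accordingly.

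For the kernel piece, the hypothesis $|x-x'| \leq \epsilon/2 \leq s/2$ ensures $|x-y| > 2|x-x'|$ throughout the integration, so the smoothness estimate \eqref{eq:smoothness} applies. Since $\Delta_K$ is additive in $(s,t)$, the bound $V^r \leq V^1$ collapses any partition sum to a single integral over $\{\epsilon<|x-y|<\delta\}$. A dyadic decomposition of $|x-y|$ together with the Dini condition yields $V^r(\Delta_K) \lesssim_d \|\omega\|_{\Dini} \sup_{\epsilon \leq t \leq \delta} A_t|f|(x)$, accounting for the $\|\omega\|_{\Dini}$ contribution in the conclusion.

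For the region piece, the key observation is that $\Delta_R(s,t) = J(s)-J(t)$ where $J(u) := \int K(x',y) f(y)\,[\mathbf{1}_{|x-y|>u}-\mathbf{1}_{|x'-y|>u}]\,dy$ has integrand supported on the shell $\{|x-y| \in [u-h,u+h]\}$ with $h := |x-x'|$; inside this shell $|x'-y|$ is comparable to $u$, so \eqref{eq:size} gives $|K(x',y)| \lesssim u^{-d}$. Each shell integral is then expanded via a layer-cake / integration-by-parts identity,
\[
\int_{u-h<|x'-y|<u+h} |x'-y|^{-d} |f(y)| \, dy = c_d\, \Aop|f|(x',u-h,u+h) + c_d d \int_{u-h}^{u+h} A_t|f|(x') \frac{dt}{t},
\]
separating the bound into an $\Aop$-increment plus a ``logarithmic'' correction of order $(h/u) \sup A_t|f|$. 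The $\Aop$-pieces, summed over a partition $(t_j)$ with the shifted endpoints $t_j \pm h$ arranged into a valid refinement (and boundary contributions absorbed via doubling), produce the $C_K(\hVop\Aop|f|(x,\epsilon,\delta) + \hVop\Aop|f|(x',\epsilon,\delta))$ term.

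The main obstacle, and the part I expect to require the most care, is summing the logarithmic correction in $\ell^r$ to yield the factor $r'$. My plan is to handle the correction scale by scale: on each dyadic block $[2^k, 2^{k+1}] \cap [\epsilon,\delta]$ the partition points contribute a geometric sum bounded uniformly in $r$, and aggregating across scales via a Lépingle-type inequality controlling $V^r$ of a slowly varying potential by $r'$ times its $L^\infty$ norm produces the $r' C_K \sup_{\epsilon \leq t \leq \delta} A_t|f|(x)$ contribution. This scale-by-scale aggregation is delicate precisely because the naive pointwise bound on $J(u)$ is not summable uniformly in the partition, and it is where the appearance of $r'$ in the statement is dictated.
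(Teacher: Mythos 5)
Your setup coincides with the paper's: the reverse triangle inequality for the $\ell^r$-sums, the split into the kernel-difference piece $\Delta_K$ and the region-difference piece $\Delta_R$ (with the kernel taken at $x'$), and the treatment of $\Delta_K$ via $V^r\le V^1$, the smoothness estimate and a dyadic decomposition are exactly the paper's term $I$. The shell identity you write for $\int_{u-h<|x'-y|<u+h}|x'-y|^{-d}|f|$ is also correct. The gap is in how you sum over the partition. Writing $\Delta_R(t_j,t_{j+1})=J(t_j)-J(t_{j+1})$ and bounding by $|J(t_j)|+|J(t_{j+1})|$ replaces every annulus difference by boundary shells of the \emph{fixed} width $2h$, $h=|x-x'|$, irrespective of the gap $t_{j+1}-t_j$. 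If many partition points cluster within distance $\ll h$ of each other (say $N$ points in an interval of length $h$ near $\epsilon$), then the shells $[t_j-h,t_j+h]$ overlap with multiplicity $N$, so their $\Aop$-increments cannot be ``arranged into a valid refinement'' and their $\ell^r$-sum is not controlled by $\hVop\Aop|f|$; likewise the correction sum contains $N$ terms each of size comparable to $(h/\epsilon)^r$, so $\sum_j(h/t_j)^r$ is unbounded over partitions. Your two proposed remedies do not close this: there is no ``L\'epingle-type'' inequality bounding $V^r$ by $r'$ times an $L^\infty$ norm (bounded functions can have infinite $r$-variation), and the scale-by-scale geometric summation needs the $t_j$ to be $\gtrsim h$-separated, which you may not assume, since for $r>1$ refining a partition can increase the $\ell^r$-sum, so you cannot pass to a coarser, separated partition for free.

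The paper closes exactly this hole with a case distinction on the gap size, which is the one idea missing from your proposal. If $t_{j+1}-t_j\le 2h$, one keeps the original thin annuli $\{t_j<|x_0-y|<t_{j+1}\}$ (at $x_0=x$ and $x_0=x'$); only if $t_{j+1}-t_j>2h$ does one insert the two boundary shells of width $h$, whose endpoints then genuinely interlace with the $t_j$. In both cases one arrives at integrals over annuli $\{s_j<|x_0-y|<s_{j+1}\}$ indexed by a single \emph{increasing} sequence $\epsilon\le s_1<\dots<s_{J'}\le\delta$ with $s_{j+1}-s_j\le 2h$. Then $s_{j+1}^{-d}\int_{s_j<|x_0-y|<s_{j+1}}|f|$ is split into the increment $s_{j+1}^{-d}\int_{|x_0-y|<s_{j+1}}|f|-s_j^{-d}\int_{|x_0-y|<s_j}|f|$, whose $\ell^r$-sum is at most $\hVop\Aop|f|(x_0,\epsilon,\delta)$ because the $s_j$ form an admissible partition, plus $(s_j^{-d}-s_{j+1}^{-d})\int_{|x_0-y|<s_j}|f|$. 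The factor $r'$ then comes from the elementary estimate $\bigl(\sum_j((s_{j+1}-s_j)/s_{j+1})^r\bigr)^{1/r}\lesssim d/(r-1)$ for increasing sequences with bounded gaps, proved by grouping the $s_j$ into unit blocks after rescaling, where the gaps sum telescopically --- clustering is harmless precisely because small gaps give small relative widths. No L\'epingle-type input is used here (L\'epingle enters only through Lemma~\ref{lem:VA-unweighted}, and the paper remarks even that is avoidable). So your architecture is right, but without the gap-size dichotomy the summation step, which you correctly identify as the crux, does not go through as proposed.
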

Theorem~\ref{thm:nontangentional-variation} is an immediate consequence of Lemma~\ref{lem:semicontinuity}, Lemma~\ref{lem:VA-unweighted}, and the Hardy--Littlewood maximal inequality.
\begin{proof}
By the triangle inequality on $\ell^{r}$ the left-hand side of the conclusion is bounded by
\[
\sup_{\epsilon\leq t_{1}<\dots<t_{J} \leq\delta} \Big( \sum_{j=1}^{J-1} \big| \int_{t_{j}<|x-y|<t_{j+1}} K(x,y) f(y) - \int_{t_{j}<|x'-y|<t_{j+1}} K(x',y) f(y) \big|^{r} \Big)^{1/r}.
\]
For a fixed sequence $t_{1}<\dots<t_{J}$ we estimate this by
\begin{multline*}
\Big( \sum_{j=1}^{J-1} |\int_{t_{j}<|x-y|<t_{j+1}} (K(x,y)-K(x',y)) f(y)|^{r} \Big)^{1/r}\\
+
\Big( \sum_{j=1}^{J-1} |(\int_{t_{j}<|x-y|<t_{j+1}} - \int_{t_{j}<|x'-y|<t_{j+1}}) K(x',y)f(y)|^{r} \Big)^{1/r}
=: I + II.
\end{multline*}
In the first term we estimate the $\ell^{r}$ norm by the $\ell^{1}$ norm and proceed as in \cite[Lemma 2.3]{arXiv:1510.05789}:
\begin{align*}
I
&\leq
\sum_{j=1}^{J-1} \int_{t_{j}<|x-y|<t_{j+1}} |K(x,y)-K(x',y)| |f(y)|\\
&\leq
\int_{\epsilon<|x-y|<\delta} \omega\big(\frac{|x-x'|}{|x-y|}\big) \frac{|f(y)|}{|x-y|^{d}}\\
&\leq
\sum_{k=0}^{\infty}
\omega\big(\frac{\epsilon/2}{2^{k}\epsilon}\big) \int_{2^{k}\epsilon<|x-y|<\min(2^{k+1}\epsilon,\delta)} \frac{|f(y)|}{|x-y|^{d}}\\
&\lesssim_{d}
\sum_{k=0}^{\infty} \omega(2^{-k-1}) \sup_{\epsilon<t<\delta} A_{t}|f|(x)\\
&\lesssim
\|\omega\|_{\Dini} \sup_{\epsilon<t<\delta} A_{t}|f|(x).
\end{align*}

In order to estimate the second term we use an idea from \cite{arXiv:1511.05129}.
If $t_{j+1}-t_{j}\leq 2|x-x'|$, then we estimate
\[
|1_{t_{j}<|x-y|<t_{j+1}} - 1_{t_{j}<|x'-y|<t_{j+1}}|
\leq
1_{t_{j}<|x-y|<t_{j+1}} + 1_{t_{j}<|x'-y|<t_{j+1}}.
\]
Otherwise we estimate
\begin{multline*}
|1_{t_{j}<|x-y|<t_{j+1}} - 1_{t_{j}<|x'-y|<t_{j+1}}|\\
\leq
|1_{t_{j}<|x-y|} - 1_{t_{j}<|x'-y|}|
+
|1_{|x-y|<t_{j+1}} - 1_{|x'-y|<t_{j+1}}|\\
\leq
1_{t_{j}<|x-y|<t_{j}+|x-x'|}
+
1_{t_{j}<|x'-y|<t_{j}+|x-x'|}\\
+
1_{t_{j+1}-|x-x'|<|x-y|<t_{j+1}}
+
1_{t_{j+1}-|x-x'|<|x'-y|<t_{j+1}}.
\end{multline*}
Thus we may estimate $II$ by a sum of two terms of the form
\[
\Big( \sum_{j=1}^{J'-1} ( \int_{s_{j}<|x_{0}-y|<s_{j+1}} |K(x',y)| |f(y)| )^{r} \Big)^{1/r},
\]
where $x_{0}=x,x'$ and the sequence $\epsilon\leq s_{1}<\dots<s_{J'}\leq \delta$ has bounded differences: $|s_{j+1}-s_{j}|\leq 2|x-x'|$.
Using the hypothesis that $|x-x'|<\epsilon/2$ and the kernel estimate we can bound the above by a dimensional constant times
\[
C_{K} \Big( \sum_{j=1}^{J'-1} ( s_{j+1}^{-d} \int_{s_{j}<|x_{0}-y|<s_{j+1}} |f(y)| )^{r} \Big)^{1/r}.
\]
The above $\ell^{r}$ norm can be written as
\begin{multline*}
\Big( \sum_{j=1}^{J'-1} \Big( s_{j+1}^{-d} \big(\int_{|x_{0}-y|<s_{j+1}} |f(y)| - \int_{|x_{0}-y|<s_{j}} |f(y)| \big) \Big)^{r} \Big)^{1/r}\\
\leq
\Big( \sum_{j=1}^{J'-1} \big( s_{j+1}^{-d} \clapint{|x_{0}-y|<s_{j+1}} |f(y)| - s_{j}^{-d} \clapint{|x_{0}-y|<s_{j}} |f(y)|) \big)^{r} \Big)^{1/r}
+
\Big( \sum_{j=1}^{J'-1} ( (s_{j}^{-d} - s_{j+1}^{-d}) \clapint{|x_{0}-y|<s_{j}} |f(y)| )^{r} \Big)^{1/r}\\
\lesssim_{d}
\hV{A_{s}|f|(x_{0})}{\epsilon<s<\delta}
+
\sup_{\epsilon<s<\delta}A_{s}|f|(x_{0})
\Big( \sum_{j=1}^{J'-1} ( (s_{j}^{-d} - s_{j+1}^{-d})/s_{j}^{-d} )^{r} \Big)^{1/r}.
\end{multline*}

It remains to obtain a uniform bound on the last bracket.
By homogeneity we may assume $1<s_{1}<s_{2}<\dots$ and $s_{j+1}-s_{j}\leq 1$.
Then
\begin{multline*}
\Big( \sum_{j} ( (s_{j}^{-d} - s_{j+1}^{-d})/s_{j}^{-d} )^{r} \Big)^{1/r}
=
\Big( \sum_{j} (1 - (s_{j}/s_{j+1})^{d})^{r} \Big)^{1/r}\\
\leq
d \Big( \sum_{j} (1 - s_{j}/s_{j+1})^{r} \Big)^{1/r}
=
d \Big( \sum_{n\in\N} \sum_{s_{j}\in [n,n+1)} (\frac{s_{j+1} - s_{j}}{s_{j+1}})^{r} \Big)^{1/r}\\
\leq
d \Big( \sum_{n\in\N} (\sum_{s_{j}\in [n,n+1)} \frac{s_{j+1} - s_{j}}{n})^{r} \Big)^{1/r}
\leq
d \Big( \sum_{n\in\N} (\frac{2}{n})^{r} \Big)^{1/r}
\lesssim
\frac{d}{r-1}.
\qedhere
\end{multline*}
\end{proof}

The proof of Lemma~\ref{lem:semicontinuity} in fact shows that the homogeneous $r$-variation in its conclusion can be restricted to the ``short variation'' that can be controlled (for $r\geq 2$) by the uncentered square function in \cite[Theorem 1.4]{arxiv:1409.7120}.
Thus the application of L\'epingle's inequality (through the use of Lemma~\ref{lem:VA-unweighted}) to estimate the error term in the above proof is not strictly necessary (but helps us to avoid additional notation).

\section{Sparse domination}
\label{sec:sparse}
The main ingredient in the proof of Theorem~\ref{thm:sparse-domination} is the cube selection rule in Lacey's recursion lemma \cite[Lemma 4.7]{arXiv:1501.05818} and its quantitative refinement \cite[Lemma 2.8]{arXiv:1510.05789}.
It can be formulated in terms of the localized non-tangentional maximal operator as follows.

Let $F$ be a subadditive monotonic function on $\cX$.
Let $Q_{0}\in\mathcal{D}_{0}$ be a dyadic cube $\lambda : Q_{0}\to [0,\infty]$ any function defined on $Q_{0}$.
Let
\[
\sigma(y):=\inf \{ \tau>0 : F(y,\tau,\dist(y,\complement Q_{0})) \leq \lambda(y) \},
\quad y\in Q_{0},
\]
and let
\[
Y := \{y\in Q_{0}: \sigma(y)>0\}.
\]
For each $y\in Y$ choose a dyadic cube $Q_{y} \subset Q_{0}$ that contains $B(y,2\sigma(y))$ and has side length $\lesssim\sigma(y)$ (such a cube exists by definition of adjacent systems).
Let $\calQ=\calQ_{\lambda}(F,Q_{0})$ be the collection of the maximal cubes among the $Q_{y}$'s.
Then for every $y\in Y$ we have
\begin{equation}
\label{eq:hV-outside-exceptional}
F(y,\dist(y,\complement Q),\dist(y,\complement Q_{0})) \leq \lambda(y)
\end{equation}
for some $Q\in\calQ$, since this holds with $Q$ replaced by $Q_{y}$ (indeed, if the left-hand side is non-zero, then $\sigma(y)<\dist(y,\complement Q_{0})$ with strict inequality, so that by construction $\dist(y,\complement Q)>\sigma(y)$ holds also with strict inequality).
In particular, by subadditivity of $F$ we obtain
\[
\Nop_{0,Q_{0}}F
\leq
1_{Q_{0}}(\lambda + \sup_{Q\in\calQ} \Nop_{0,Q}F).
\]
\begin{lemma}
\label{lem:recursion}
Suppose that the function $\lambda(x)$ identically equals a constant $\lambda$.
Then the collection $\calQ=\calQ_{\lambda}(F,Q_{0})$ of dyadic cubes $Q\subset Q_{0}$ constructed above satisfies
\begin{equation}
\label{lem:recursion:sparse}
\sum_{Q\in \calQ} |Q| \lesssim |\{ \Nop_{Q_{0}} F > \lambda \}|
\end{equation}
and for every subadditive function $\tilde F \leq F$ we have
\begin{equation}
\label{lem:recursion:domination}
\Nop_{Q_{0}} \tilde F
\leq
1_{Q_{0}} (\lambda + \sup_{Q\in\calQ} \Nop_{Q}\tilde F).
\end{equation}
\end{lemma}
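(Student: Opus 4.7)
The plan is to verify the two claims separately, leveraging the consequence \eqref{eq:hV-outside-exceptional} of the construction already derived in the paragraphs preceding the lemma: for each $y \in Y$ there is $Q \in \calQ$ with $y \in Q$ and $F(y, \dist(y, \complement Q), \dist(y, \complement Q_{0})) \leq \lambda$.

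For the sparsity bound \eqref{lem:recursion:sparse}, the cubes of $\calQ$ are pairwise disjoint by maximality, so it suffices to produce for each $Q \in \calQ$ a subset of comparable measure lying in $\{\Nop_{Q_{0}} F > \lambda\}$. I would pick a witness $y_{Q} \in Y$ with $Q = Q_{y_{Q}}$; since $B(y_{Q}, 2\sigma(y_{Q})) \subset Q$ and the side length of $Q$ is $\lesssim \sigma(y_{Q})$, the doubling property yields $|B(y_{Q}, \sigma(y_{Q}))| \gtrsim |Q|$. For any $z$ in that ball, an $s$ strictly between $\rho(z, y_{Q})$ and $\sigma(y_{Q})$ together with $t = \dist(y_{Q}, \complement Q_{0})$ produces a triple admissible in the supremum defining $\Nop_{Q_{0}} F(z)$ with $F(y_{Q}, s, t) > \lambda$ by the infimum definition of $\sigma$. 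Summing over the disjoint family then yields the claim.

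For the pointwise domination \eqref{lem:recursion:domination}, fix $x \in Q_{0}$ and an admissible triple $(y, s, t)$ in the supremum defining $\Nop_{Q_{0}} \tilde F(x)$; I want to show $\tilde F(y, s, t) \leq \lambda + \sup_{Q \in \calQ} \Nop_{Q} \tilde F(x)$. The case $\sigma(y) = 0$ is immediate from $\tilde F \leq F$ and monotonicity of $F$ applied to any witness $\tau < s$ of the infimum. When $\sigma(y) > 0$, I invoke \eqref{eq:hV-outside-exceptional} with the associated cube $Q \in \calQ$ and split into subcases according to whether $x \in Q$ and where $\dist(y, \complement Q)$ sits relative to $[s, t]$: if $x \notin Q$ (so that $\dist(y, \complement Q) \leq \rho(x, y) < s$) or more generally $s > \dist(y, \complement Q)$, monotonicity of $F$ yields $F(y, s, t) \leq F(y, \dist(y, \complement Q), \dist(y, \complement Q_{0})) \leq \lambda$; if $x \in Q$ with $t \leq \dist(y, \complement Q)$, the triple is itself admissible in $\Nop_{Q} \tilde F(x)$; and if $x \in Q$ with $s \leq \dist(y, \complement Q) < t$, subadditivity of $\tilde F$ splits $\tilde F(y, s, t)$ into a piece $\leq \Nop_{Q} \tilde F(x)$ and a piece controlled again through $F$ by \eqref{eq:hV-outside-exceptional}.

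The main obstacle is that monotonicity is hypothesized only for $F$, not for $\tilde F$: to estimate the portion of the variation that extends outside $Q$, one must first peel it off via subadditivity of $\tilde F$ and only then route the resulting bound through $F$ and \eqref{eq:hV-outside-exceptional}. The rest is bookkeeping of the geometric configurations of $x$, $y$, and $\dist(y, \complement Q)$.
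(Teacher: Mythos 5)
Your proof follows essentially the same route as the paper's on both halves: for \eqref{lem:recursion:sparse}, the witness ball $B(y_{Q},\sigma(y_{Q}))\subset Q$ of measure comparable to $|Q|$ (by doubling) lying inside $\{\Nop_{Q_{0}}F>\lambda\}$ via the infimum definition of $\sigma$; for \eqref{lem:recursion:domination}, peeling off the part of the triple beyond $\dist(y,\complement Q)$ by subadditivity of $\tilde F$ and routing it through $\tilde F\leq F$, monotonicity of $F$, and \eqref{eq:hV-outside-exceptional} — your explicit case analysis ($\sigma(y)=0$; $s>\dist(y,\complement Q)$; $t\leq\dist(y,\complement Q)$; the split case) is just a more careful rendering of the paper's compressed chain of inequalities, and is sound. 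The one inaccuracy is the assertion that the cubes of $\calQ$ are pairwise disjoint by maximality: the $Q_{y}$ are chosen from the several adjacent systems $\cD^{\alpha}$, and two maximal cubes lying in \emph{different} systems can overlap without either containing the other, so "summing over the disjoint family" is not justified as stated. The paper fixes $\alpha$ first: within a single system $\cD^{\alpha}$, nestedness plus maximality does give disjointness, and one then sums over the finitely many $\alpha$; equivalently, your witness balls have overlap bounded by the number of adjacent systems, which only affects the implicit constant in \eqref{lem:recursion:sparse}. With that grouping step added, your argument coincides with the paper's proof.
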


\begin{proof}
We write the left-hand side of \eqref{lem:recursion:sparse} as
\[
\sum_{\alpha} \sum_{Q\in\calQ\cap\cD^{\alpha}} |Q|
\]
and fix $\alpha$.
Since the cubes in $\calQ\cap\cD^{\alpha}$ are disjoint and each of them contains $B(y,\sigma(y))$ for some $y\in Y$ and has side length $\lesssim \sigma(y)$, the inner sum is bounded by a constant (depending on the doubling constant) times the measure of
\[
\bigcup_{y\in Y} \{ x : |x-y|<\sigma(y)\}
\subset
\{ x\in Q_{0} : \Nop_{Q_{0}}F(x) > \lambda\}.
\]

It remains to prove \eqref{lem:recursion:domination}.
If $\Nop_{Q_{0}}\tilde F(x)>\lambda$, then
\begin{multline*}
\Nop_{Q_{0}}\tilde F(x)
=
\sup_{y\in Y} \tilde F(y,|x-y|,\dist(y,\complement Q_{0}))\\
\leq
\sup_{y\in Y} \inf_{Q\in\calQ}
\Big( \tilde F(y,\dist(y,\complement Q),\dist(y,\complement Q_{0}))
+
\tilde F(y,|x-y|,\dist(y,\complement Q)) \Big)\\
\leq
\lambda + \sup_{y\in Y} \sup_{Q\in\calQ} \tilde F(y,|x-y|,\dist(y,\complement Q))
\end{multline*}
by subadditivity of $\tilde F$, the assumption $\tilde F\leq F$, and \eqref{eq:hV-outside-exceptional}.
The last summand can be non-zero only if $|x-y|<\dist(y,\complement Q)$, so that $x\in Q$, so it can be estimated by $\Nop_{Q}\tilde F(x)$.
\end{proof}

\begin{proof}[Proof of Theorem~\ref{thm:sparse-domination}]
For a cube $Q$ denote by $\cQ(Q)$ the family provided by Lemma~\ref{lem:recursion} applied $Q$ with $\lambda=c_{Q}$, so that $|Q|^{-1}\sum_{Q'\in\cQ(Q)}|Q'| \leq C_{\eqref{lem:recursion:sparse}}\epsilon$.
Therefore, in view of the doubling hypothesis, $n(Q')>n(Q)$ for all $Q'\in\cQ(Q)$ provided that $\epsilon$ is small enough.

Following the proof of \cite[Theorem 4.2]{arXiv:1501.05818}, initialize $\cP_{k_{0}} := \cup_{\alpha} \cD^{\alpha}_{k_{0}}$ and define inductively
\begin{align*}
\cP_{k}^{*} &:= \cP_{k} \cap \cup_{\alpha} \cD^{\alpha}_{k},\\
\cP_{k+1} &:= \text{maximal cubes in } (\cP_{k}\setminus\cP_{k}^{*}) \cup \bigcup_{P\in\cP_{k}^{*}} \cQ(P).
\end{align*}
The sparse collections in the conclusion of the theorem will be given by
\[
\cS^{\alpha} := \cS \cap \cD^{\alpha},
\quad
\cS := \cup_{k\geq k_{0}}\cP_{k}^{*}.
\]

Let us first verify the Carleson property for the collections $\cS^{\alpha}$.
We call the cubes $Q\in\cQ(P)$, $P\in\cP_{k}^{*}$, the \emph{$\cQ$-children} of $P$.
Note that a cube can have many $\cQ$-parents.
We claim that all $\cQ$-descendants of any cube $P$ are contained in a ball $B(z(P),C\delta^{k(P)})$, where $C$ is a constant that depends only on the quasimetric constant and $\delta$.
Indeed, if $(z_{0},z_{1},\dots)$ is a sequence of points with $\rho(z_{n},z_{n+1}) \leq C \delta^{n}$, then $\rho(z_{2^{m}n},z_{2^{m}(n+1)}) \leq A_{0}^{m} C \sigma^{n}$ with $\sigma=\delta^{2^{m}}$.
Choosing $m$ so large that $\sigma A_{0}<1$, we can estimate
\begin{multline*}
\rho(z_{0},z_{2^{m}n})
\leq
A_{0}(\rho(z_{2^{m}0},z_{2^{m}1})+A_{0}(\rho(z_{2^{m}1},z_{2^{m}2})+\dots)))\\
\leq
A_{0}^{m} C \sum_{l=0}^{\infty} (A_{0}\sigma)^{l}
\leq
\frac{A_{0}^{m}}{1-A_{0}\sigma} C,
\end{multline*}
and the claim follows.

Now let $Q,Q' \in \cS^{\alpha}$ with $Q'\subsetneq Q$, so that in particular $k(Q')>k(Q)$.
Then by construction $Q'\not\in\cP_{k(Q)}$.
On the other hand, since $Q'\in\cP_{k(Q')}$, it must have a $\cQ$-ancestor $P$ in $\cP_{k(Q)}$, and since by the above argument $Q'$ is contained in a ball of radius $C\delta^{k(P)}$ with center in $P$, the cube $P$ must in turn be contained in $B(z(Q),C\delta^{k(Q)})$ for some larger constant $C$.
Since the elements of $\cP_{k(Q)}\cap\cD^{\alpha}$ are maximal and therefore disjoint, the family $\cP_{k(Q)}$ has bounded overlap, and by the doubling property of our measure space it follows that the total measure of all possible ancestors in $\cP_{k(Q)}$ is bounded by a multiple of $|Q|$.
Moreover, if $\epsilon < 1/C_{\eqref{lem:recursion:sparse}}$, then the total mass of all $\cQ$-descendants of each $P$ is bounded by a constant times the measure of $P$.
This completes the verification of the Carleson condition.

It remains to show \eqref{eq:sparse-domination}.
Consider the family of truncations of the function $F$ given by $F_{\tau}(x,t,s) := F(x,\max(t,\tau),\max(s,\tau))$.
By induction on $K\geq k_{0}$ we obtain
\begin{equation}
\label{eq:sparse-domination:induction}
\max_{Q_{0}\in\cP_{k_{0}}}\Nop_{Q_{0}} F_{\tau}
\leq
\sum_{k=k_{0}}^{K-1}\sum_{Q\in\mathcal{P}_{k}^{*}} c_{Q} 1_{Q}
+
\max_{Q\in\cP_{K}} \Nop_{Q} F_{\tau}
\end{equation}
for each $\tau>0$.
Indeed, the base case $K=k_{0}$ holds trivially, and in the inductive step we can apply \eqref{lem:recursion:domination} and obtain
\begin{multline*}
\max_{Q\in\cP_{K}} \Nop_{Q} F_\tau
=
\max \Big\{ \max_{Q\in\cP_{K}\setminus \cP_{K}^{*}} \Nop_{Q} F_\tau,
\max_{Q\in\cP_{K}^{*}} \Nop_{Q} F_\tau \Big\}\\
\leq
\max \Big\{ \max_{Q\in\cP_{K}\setminus \cP_{K}^{*}} \Nop_{Q} F_\tau,
\max_{Q\in\cP_{K}^{*}} (c_{Q} 1_{Q} + \max_{Q'\in\cQ(Q)} \Nop_{Q'} F_\tau) \Big\}\\
\leq
\max_{Q\in\cP_{K+1}} \Nop_{Q} F_\tau
+ \sum_{Q\in\cP_{K}^{*}} c_{Q} 1_{Q}.
\end{multline*}
The second summand on the right-hand side of \eqref{eq:sparse-domination:induction} vanishes identically for each fixed $\tau>0$ and $K$ that are so large that $\delta^{K}\ll \tau$.
Thus we have obtained
\[
\max_{Q_{0}\in\cP_{k_{0}}}\Nop_{Q_{0}} F_{\tau} \leq \sum_{\alpha} \sum_{Q\in\mathcal{S}^{\alpha}} 1_{Q} c_{Q},
\]
and the left-hand side converges to $\Nop F$ pointwise as $\tau\to 0$ and $k_{0}\to-\infty$.
\end{proof}

\section{Commutators of BMO functions and CZ operators}
\label{sec:commutator}
In this section we prove a sparse domination theorem for iterated commutators of BMO functions with multilinear operators that extends \cite[Theorem 1.1]{arXiv:1604.01334}.
An $m$-linear operator $\Top$ taking an $m$-tuple $\vec f = (f_{1},\dots,f_{m})$ of functions defined on $X$ to a function defined on $\cX$ is called \emph{local} if $\Top(\vec f)(x,s,t)$ depends only on the restrictions of the functions $f_{j}$ to the ball $B(x,t)$.
The main case of interest are truncations of multilinear CZ operators.

Let $B$ be an index set and $\jmath:B\times \{0,1\} \to \{0,\dots,m\}$.
For a tuple of functions $\vec b=(b_{\beta})_{\beta\in B}$, $j\in\{0,\dots,m\}$, and an index $a\in\{0,1\}^{B}$ let $b_{a,j}:=\prod_{\beta : \jmath(\beta,a(\beta))=j} (-1)^{a(\beta)} b_{\beta}$.
The (iterated) $\jmath$-commutator of $\vec b$ with an $m$-linear operator $\Top$ is defined by
\[
[\vec b,\Top]_{\jmath}(\vec f)(x,s,t) :=
\sum_{a\in \{0,1\}^{B}} b_{a,0}(x)
\Top(\widevec{f b_{a}})(x,s,t),
\]
where $\widevec{f b_{a}}$ is the vector $(f_{1}b_{a,1},\dots,f_{m}b_{a,m})$.
Multilinear operators of this type have been studied in \cite{MR2483720}.

The next result extends \cite[Theorem 1.1]{arXiv:1604.01334}.
Note that it holds for spaces of homogeneous type; this allows one to recover a number of results in that setting, see e.g.\ \cite{arXiv:1401.2061}.
\begin{theorem}
\label{thm:bmo-czo-commutator}
For every space of homogeneous type $(X,\rho,\mu)$ and every choice of adjacent systems of dyadic cubes $\cD^{\alpha}$ there exists $0<\eta<1$ such that the following holds.
Let $1\leq r\leq\infty$ and let $\Top$ be an $m$-linear local operator such that
\begin{equation}
\label{eq:bmo-czo:endpoint-hypothesis}
C_{\Top} := \|\Nop \circ \hVop \circ \Top\|_{L^{1}\times\dots\times L^{1}\to L^{1/m,\infty}} < \infty.
\end{equation}
Let $B,\jmath,\vec b$ be as above and let $c_{\beta,Q}$ for $\beta\in B$ and $Q\in\cup_{\alpha}\cD^{\alpha}$ be arbitrary numbers.
Let also $Q_{0}$ be an initial dyadic cube and $f_{1},\dots,f_{m}\in L^{\infty}(Q_{0})$.
Then there exist $\eta$-sparse collections $\calS^{\alpha,k_{0}} \subset \calD^{\alpha}$ such that we have
\[
\Nop_{0} \hVop{} [\vec b,\Top]_{\jmath} \vec f
\lesssim
C_{\Top} \liminf_{k_{0}\to-\infty} \sum_{\alpha} \sum_{Q\in\calS^{\alpha,k_{0}}} \{ \vec b, \vec f \}_{\jmath,Q}
\]
pointwise almost everywhere, where
\[
\{ \vec b, \vec f \}_{\jmath,Q}(x) := 1_{Q}(x) \sum_{a\in\{0,1\}^{B}} |b_{a,0,Q}(x)| \prod_{j=1}^{m} \mean{| b_{a,j,Q} f_{j}|}
\]
and
\[
b_{a,j,Q}:=\prod_{\beta : \jmath(\beta,a(\beta))=j} (-1)^{a(\beta)} (b_{\beta}-c_{\beta,Q}).
\]
\end{theorem}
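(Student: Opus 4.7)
The plan is to mirror the proof of Theorem~\ref{thm:sparse-domination}, replacing the scalar hypothesis \eqref{eq:local-weak(1,1)-bound} with a multilinear weak-type estimate and inserting an algebraic cancellation that produces the pointwise factor $|b_{a,0,Q}(x)|$ in the sparse bound.

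The first step is the cancellation identity
\[
[\vec b,\Top]_{\jmath}(\vec f)(x,s,t) = \sum_{a\in\{0,1\}^{B}} b_{a,0,Q}(x)\, \Top(\widevec{f\, b_{a,Q}})(x,s,t),
\]
valid for every cube $Q$ and any choice of $c_{\beta,Q}$, where $\widevec{f\, b_{a,Q}}=(f_{1}b_{a,1,Q},\dots,f_{m}b_{a,m,Q})$. This is purely algebraic: the right-hand side is multilinear in the $(b_{\beta}-c_{\beta,Q})_{\beta}$, and for each $\beta$ the contributions from $a(\beta)=0$ and $a(\beta)=1$ (with other coordinates fixed) produce $\pm 1$ times the same expression when one differentiates in $c_{\beta,Q}$, so the derivative vanishes; hence the right-hand side is independent of the constants, and setting $c_{\beta,Q}=0$ recovers the definition of the commutator. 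Combined with subadditivity of $\hVop$ in $(s,t)$ and the $\ell^{r}$ triangle inequality, this gives
\[
\hVop[\vec b,\Top]_{\jmath}\vec f(x,s,t) \leq \sum_{a\in\{0,1\}^{B}} |b_{a,0,Q}(x)|\, \hVop\Top(\widevec{f\, b_{a,Q}})(x,s,t).
\]

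The second step is the local multilinear weak-type estimate. Locality of $\Top$ lets us replace $f_{j}b_{a,j,Q}$ by $f_{j}b_{a,j,Q}\cdot 1_{Q}$ when evaluating $\Top(\widevec{f\, b_{a,Q}})(y,s,t)$ at nontangential points of $Q$, and the hypothesis \eqref{eq:bmo-czo:endpoint-hypothesis} then yields
\[
\mu\{\Nop_{Q}\hVop\Top(\widevec{f\, b_{a,Q}\, 1_{Q}}) > \lambda_{a,Q}\} \leq \mu(Q)\bigl(C_{\Top}\lambda_{a,Q}^{-1}\textstyle\prod_{j}\mean{|f_{j}b_{a,j,Q}|}\bigr)^{1/m}.
\]
Setting $\lambda_{a,Q}:=C_{\Top}(2^{|B|}/\epsilon)^{m}\prod_{j}\mean{|f_{j}b_{a,j,Q}|}$ bounds the left-hand side by $\epsilon\mu(Q)/2^{|B|}$; a union over $a\in\{0,1\}^{B}$ combined with the pointwise bound from the previous step shows that
\[
\mu\Bigl\{x\in Q : \Nop_{Q}\hVop[\vec b,\Top]_{\jmath}\vec f(x) > \tfrac{C_{\Top}(2^{|B|})^{m}}{\epsilon^{m}}\{\vec b,\vec f\}_{\jmath,Q}(x)\Bigr\} \leq \epsilon\mu(Q).
\]

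With this local estimate in hand, the cube-selection rule of Lemma~\ref{lem:recursion} and the stopping-time bookkeeping from the proof of Theorem~\ref{thm:sparse-domination} apply starting from $Q_{0}$, producing the $\eta$-sparse subcollections $\cS^{\alpha,k_{0}}\subset \cD^{\alpha}$. The main obstacle, and the reason Theorem~\ref{thm:sparse-domination} does not apply as a black box, is that the stopping threshold $\{\vec b,\vec f\}_{\jmath,Q}(x)$ is no longer a constant in $x$. This is harmless because the $x$-dependence is carried entirely by the pointwise multipliers $|b_{a,0,Q}(x)|$, which factor out of $\hVop\Top(\widevec{f\, b_{a,Q}})$ before any stopping takes place; only the honest constants $\prod_{j}\mean{|f_{j}b_{a,j,Q}|}$ enter the cube-selection rule, and Lemma~\ref{lem:recursion} is applied once per $a\in\{0,1\}^{B}$, at the cost of a factor $2^{|B|}$ in the smallness parameter $\epsilon$.
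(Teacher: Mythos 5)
Your step 1 (invariance of the commutator under $b_{\beta}\mapsto b_{\beta}-c_{\beta,Q}$ and the resulting pointwise bound) agrees with the paper. The gap is in step 2. The pointwise bound you obtain is $\hVop[\vec b,\Top]_{\jmath}\vec f(y,s,t)\leq\sum_{a}|b_{a,0,Q}(y)|\,\hVop\Top(\widevec{fb_{a,Q}})(y,s,t)$, with the multiplier evaluated at the cone point $y$, whereas the threshold $\{\vec b,\vec f\}_{\jmath,Q}(x)$ carries $|b_{a,0,Q}(x)|$ at the vertex $x$. These do not match once you apply the \emph{uncentered} localized operator $\Nop_{Q}$, so the union bound you invoke is not available, and in fact your claimed local estimate $\mu\{x\in Q:\Nop_{Q}\hVop[\vec b,\Top]_{\jmath}\vec f(x)>C\{\vec b,\vec f\}_{\jmath,Q}(x)\}\leq\epsilon\mu(Q)$ is false in general: take $m=1$, the ordinary commutator $[b,T]$, choose $c_{Q}$ so that $b-c_{Q}$ vanishes on half of $Q$ and is huge on the other half, and let $f$ be supported where $b=c_{Q}$; then $\{\vec b,\vec f\}_{\jmath,Q}(x)=0$ on the first half (both terms vanish), while $\Nop_{Q}\hVop[b,\Top]f(x)=\sup_{y}|b(y)-c_{Q}|\hVop\Top f(y,s,t)>0$ on a set of measure comparable to $|Q|$, because the cone over such $x$ reaches points $y$ with $|b(y)-c_{Q}|$ large. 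This vertex-versus-cone-point mismatch is precisely why the theorem is stated for the centered operator $\Nop_{0}$ (the paper notes that $\Nop$ is admissible only when $B=\emptyset$); your proposal never addresses the $\Nop_{0}$ restriction and implicitly works with the uncentered operator where it cannot hold.

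The paper avoids any weak-type statement for the commutator itself. It runs the stopping time with the non-constant threshold $\lambda(y)=\epsilon^{-1}C\{\vec b,\vec f\}_{\jmath,Q_{0}}(y)$, so that in the defining strict inequality $F(y,\tfrac12\sigma(y),\dist(y,\complement Q_{0}))>\lambda(y)$ both sides carry the factor $|b_{a,0,Q_{0}}(y)|$ at the \emph{same} point $y$; this factor is then necessarily nonzero and cancels, and the hypothesis \eqref{eq:bmo-czo:endpoint-hypothesis} together with locality is applied only to $\hVop\Top(\widevec{fb_{a,Q_{0}}})$ at the \emph{constant} level $\epsilon^{-1}C\prod_{j}\mean[Q_{0}]{|b_{a,j,Q_{0}}f_{j}|}$, yielding $\sum_{Q\in\calQ}|Q|\lesssim\epsilon^{1/m}|Q_{0}|$; the domination step uses the inequality $\Nop_{0,Q_{0}}F\leq 1_{Q_{0}}(\lambda+\sup_{Q\in\calQ}\Nop_{0,Q}F)$, which holds for arbitrary functions $\lambda$ only because the operator is centered. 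Your closing remark (apply the cube selection once per $a$ with the constant thresholds $\prod_{j}\mean[Q]{|f_{j}b_{a,j,Q}|}$) points toward a workable variant --- one can select cubes for each $\hVop\Top(\widevec{fb_{a,Q_{0}}})$ separately, take the union, and check that the commutator stopping time satisfies $\sigma(y)\leq\max_{a}\sigma_{a}(y)$ --- but as written it contradicts your step 2, leaves the domination inequality with the $x$-dependent threshold (where $\Nop_{0}$ and the cancellation actually enter) unproved, and note that Lemma~\ref{lem:recursion} as stated assumes a constant $\lambda$, so it cannot be cited verbatim for the commutator.
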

In absence of commutators ($B=\emptyset$) this follows directly from Theorem~\ref{thm:sparse-domination}, and in fact the centered operator $\Nop_{0}$ can be replaced by the uncentered operator $\Nop$ in the conclusion.
In presence of commutators the most interesting choice of constants is of course $c_{\beta,Q} = \mean{b_{\beta}}$.
\begin{proof}[Proof of Theorem~\ref{thm:bmo-czo-commutator}]
The only difference from Theorem~\ref{thm:sparse-domination} is that we need a suitable substitute for \eqref{lem:recursion:sparse} when
\[
F
=
\hVop{} [\vec b,\Top]_{\jmath}\vec f
\]
and
\[
\lambda(x) = \epsilon^{-1} C_B \{ \vec b, \vec f \}_{Q_{0}}(x).
\]
Note that, by multilinearity of $\Top$, the function $F$ does not change upon replacing $b_{\beta}$ by $b_{\beta}-c_{\beta,Q_{0}}$.
For each $y\in Y$ we have
\[
\lambda(y)
<
F(y,\frac12 \sigma(y),\dist(y,\complement Q_{0})).
\]
By the triangle inequality for the $\ell^{r}$ norm this implies
\[
\epsilon^{-1} C_B |b_{a,0,Q_{0}}(y)| \prod_{j=1}^{m} \mean[Q_{0}]{|b_{a,j,Q_{0}} f_{j}|}
<
|b_{a,0,Q_{0}}(y)| \hVop \Top( \widevec{f b_{a,Q_{0}}} )(y,\frac12 \sigma(y), \dist(y,\complement Q_{0}))
\]
for some $a\in\{0,1\}^{B}$.
Since this inequality is strict, the factor $|b_{a,0,Q_{0}}(y)|$ cannot be zero and can be canceled.
It follows that
\[
\bigcup_{y\in Y} B(y,\sigma_{y}/4)
\subset
\bigcup_{a \in \{0,1\}^{B}}
\Big\{ \hVop \Top(\widevec{fb_{a,Q_{0}}})
>
\epsilon^{-1} C_B \prod_{j=1}^{m} \mean[Q_{0}]{|b_{a,j,Q_{0}} f_{j}|} \Big\},
\]
and the measures of the latter sets are bounded by $\epsilon^{1/m}|Q_{0}|$ by definition of $C_B$ and locality of $\Top$.
This provides the estimate $\sum_{Q\in\calQ} |Q| \lesssim \epsilon^{1/m}|Q_{0}|$.
\end{proof}

The above domination theorem requires as an input an endpoint weak type estimate \eqref{eq:bmo-czo:endpoint-hypothesis} for $\Nop\circ\Vop\circ\Top$.
In the multilinear case such bounds are known only for $r=\infty$ (that is, for maximal truncations) and can be found in \cite{arXiv:1512.02400} (where they are stated for $X=\R^{d}$).
More precisely, the weak type estimate for $\Nop_{0}\circ\Vop[\infty]\circ\Top$ is proved in \cite[\textsection 6]{arXiv:1512.02400} and the weak type estimate for $\Nop\circ\Vop[\infty]\circ\Top$ is effectively proved in \cite[\textsection 3.1]{arXiv:1512.02400}.
The main difference from the linear case is the need to use the multilinear maximal function from \cite[Theorem 3.3]{MR2483720}.

In the linear case one can obtain the hypothesis \eqref{eq:bmo-czo:endpoint-hypothesis} with $2<r<\infty$ for a certain class of CZ operators from Theorem~\ref{thm:nontangentional-variation}.
Using the results of \cite[\textsection 4]{arXiv:1604.01334} this implies weighted estimates for variational truncations of commutators of CZ operators with BMO functions.
In fact, even unweighted estimates for variational truncations of such commutators seem to be new.

\printbibliography
\end{document}

%%% Local Variables: 
%%% mode: latex
%%% TeX-master: t
%%% ispell-local-dictionary: "american"
%%% End: 